\newcommand{\Real}{\mathbb R}
\newcommand{\abs}[1]{\left\vert#1\right\vert}
\renewcommand{\phi}{\varphi}
\newcommand{\eps}{\varepsilon}
\renewcommand{\ge}{\geqslant}
\renewcommand{\le}{\leqslant}
\newtheorem{thm}{Theorem}
\newtheorem{prop}{Proposition}
\newtheorem{lm}{Lemma}
\newcommand{\Wf}{\stackrel{o\ }{W{}_1^1}}
\newcommand{\I}{\mathcal{I}}
\newcommand{\J}{\mathcal{J}}
\newcommand{\K}{\mathcal{K}}
\newcommand{\B}{{\bf b}}
\newcommand{\Ln}{\mathop{\rm ln}}
\title{On monotonicity of some functionals with variable exponent under symmetrization}
\author{
S.~Bankevich \footnote{St. Petersburg State University, Russia;
Sergey.Bankevich@gmail.com}
\and A.~I.~Nazarov \footnote{St. Petersburg Dept of Steklov Math
Institute and St. Petersburg State University, Russia;
al.il.nazarov@gmail.com} }
\date{}
\begin{document}
\maketitle


{\small
\section*{Abstract}

We give necessary and sufficient conditions for the P\'olya--Szeg\"o type inequality with variable exponent of summability.

MSC classes: 49K05, 49N99

}

\section{Introduction}

First, we recall the layer cake representation for a measurable function $u: [-1, 1] \to \Real_+$
(here and elsewhere $\Real_+ = [0,\infty)$).
Namely, if we set $\mathcal{A}_t: = \{x \in [-1,1]:\ u(x)> t \}$
then $u(x) = \int_0^\infty \chi_{\mathcal{A}_t} \, dt$.

We define the symmetric rearrangement (symmetrization) of a measurable set $E \subset [-1, 1]$ and the
symmetrization of a nonnegative function $u \in \Wf(-1, 1)$ as follows:
\begin{eqnarray*}
E^*: = [-\frac{\abs{E}}{2}, \frac{\abs{E}}{2}]; \qquad
u^*(x): = \int\limits_0^\infty \chi_{\mathcal{A}_t^*} \, dt.
\end{eqnarray*}

The well-known P\'olya-Szeg\"o inequality for $u \in \Wf(-1, 1)$ reads
\begin{equation}
\label{PS}
I(u^*) \le I(u), \qquad \text{where } I(u) = \int\limits_{-1}^1 |u'(x)|^p dx, \quad p \ge 1
\end{equation}
(in particular $I(u^*)=\infty$ implies $I(u)=\infty$).

The generalizations of the inequality (\ref{PS}) were considered in a number of papers, see, e.g., the survey \cite{Tal} and references therein.
In particular, the paper \cite{Br} deals with functionals of the form
\begin{equation*}
\label{functional}
I(\mathfrak a, u) = \int\limits_{-1}^1 F\big(u(x), \mathfrak a(x, u(x)) \abs{u'(x)}\big) \, dx
\end{equation*}
and their multidimensional analogues. Here $\mathfrak a: [-1, 1] \times \Real_+ \to \Real_+$ is a continuous function convex w.r.t. the first argument
while $F: \Real_+ \times \Real_+ \to \Real_+$ is a continuous function convex w.r.t. the second argument, $F( \cdot, 0 ) \equiv 0$.
A gap in the proof in \cite{Br} was filled in
\cite{BN-CV} where the inequality $I(\mathfrak a, u^*)\le I(\mathfrak a, u)$ was proved for the natural class of functions $u$. Similar results
for monotone rearrangement were also obtained in \cite{BN-DAN, BN-CV, B-ZNS}.

In this paper we consider functionals with variable summability exponent
\begin{equation*}
\J(u) = \int\limits_{-1}^1 |u'(x)|^{p(x)} dx, \qquad \I(u) = \int\limits_{-1}^1 ( 1 + | u'(x) |^2 )^{\frac {p(x)}{2}} dx.
\end{equation*}
Here $p(x) \ge 1$ is a continuous function defined on $[-1, 1]$, $u \in \Wf[-1, 1]$, $u \ge 0$. Such functionals arise in some problems of mathematical physics,
in particular in modeling of electrorheological fluids, see, e.g., \cite{Zh} and \cite{DHHR}.

Our results were partially announced in \cite{B-FAA}.

The article is divided into 6 sections. In Section 2 we deduce the necessary conditions for the P\'olya-Szeg\"o type inequalities.
It turns out that for the functional $\J$ such inequality holds only in trivial case $p\equiv const$ while for the functional $\I$
we have non-trivial conditions. In Section 3 we show that necessary conditions are also sufficient for the inequality $\I(u^*) \le \I(u)$.
In Section 4 we give some explicit sufficient conditions for the exponent $p$ which ensure this inequality.

In Section 5 we consider a multidimensional analogue of the inequality $\I(u^*) \le \I(u)$. Unexpectedly, such inequality can hold only for
the exponent $p$ which is constant w.r.t. the variable of rearrangement.

Section 6 (Appendix) contains a numerical-analytical proof of some inequalities in Section 4.

\section{Necessary conditions}

\begin{thm}
\label{uniform}
Suppose $\J(u^*) \le \J(u)$ holds for any piecewise linear function $u\ge0$.
Then $p(x) \equiv const$.
\end{thm}

\begin{proof}
Consider a point $x_0 \in (-1, 1)$. For every $\alpha > 0$ and $\eps > 0$ such that $[x_0 - \eps, x_0 + \eps] \subset [-1, 1]$ we define
$$
u_{\alpha,\eps}(x) = \alpha ( \eps - |x - x_0| )_+.
$$
Then $u_{\alpha,\eps}^*(x) = \alpha ( \eps - |x| )_+$, and
$$
\J(u_{\alpha, \eps}) = \int\limits_{x_0 - \eps}^{x_0 + \eps} \alpha^{p(x)} dx, \qquad
\J(u_{\alpha, \eps}^*) = \int\limits_{-\eps}^{\eps} \alpha^{p(x)} dx.
$$
We take the inequality
$$
\frac {\J(u_{\alpha, \eps}^*)}{2 \eps} \le \frac {\J(u_{\alpha, \eps})} {2 \eps},
$$
and push $\eps$ to zero. Since $p$ is continuous, this gives $\alpha^{p(0)} \le \alpha^{p(x_0)}$.
Taking $\alpha > 1$ and $\alpha < 1$ we arrive at $p(0) \le p(x_0)$ and $p(0) \ge p(x_0)$ respectively.
\end{proof}


\begin{thm}
\label{necessary}
Suppose inequality $\I(u^*) \le \I(u)$ holds for any piecewise linear function $u\ge0$.
Then $p$ is even and convex.
Moreover, the following function is convex:
$$K(s, x) = s ( 1 + s^{-2} )^{\frac {p(x)}{2}} \qquad s > 0,\ x \in [-1, 1].$$
\end{thm}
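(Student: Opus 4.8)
The plan is to test $\I(u^*)\le\I(u)$ on a family of trapezoidal functions. Fix $H,\sigma_1,\sigma_2,\ell_0>0$ and a position $a$ such that the interval $[a,\,a+\sigma_1H+\ell_0+\sigma_2H]$ lies in $[-1,1]$, and let $u$ be the function supported there which rises with slope $1/\sigma_1$ on the first subinterval of length $\sigma_1H$, equals $H$ on the middle subinterval of length $\ell_0$, and falls with slope $-1/\sigma_2$ on the last subinterval of length $\sigma_2H$. A description of the level sets shows that $u^{*}$ is again such a trapezoid: supported on $[-L/2,L/2]$ with $L=\sigma_1H+\ell_0+\sigma_2H$, equal to $H$ on $[-\ell_0/2,\ell_0/2]$, with slopes $\pm1/\bar\sigma$ on the two remaining pieces, where $\bar\sigma:=(\sigma_1+\sigma_2)/2$. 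Since $(1+|u'|^2)^{p(x)/2}\equiv1$ wherever $u'=0$, the sets $\{u=0\}$ and $\{u=H\}$ contribute only their lengths, which are rearrangement‑invariant, so those terms cancel in $\I(u^*)\le\I(u)$ and one is left with the \emph{master inequality}
\[
\int_{-L/2}^{-\ell_0/2}\!\!g_{\bar\sigma}+\int_{\ell_0/2}^{L/2}\!\!g_{\bar\sigma}\ \le\ \int_{a}^{a+\sigma_1H}\!\!g_{\sigma_1}+\int_{a+\sigma_1H+\ell_0}^{a+L}\!\!g_{\sigma_2},\qquad g_{\sigma}(x):=\bigl(1+\sigma^{-2}\bigr)^{p(x)/2},
\]
valid for all admissible parameters; note also $K(\sigma,x)=\sigma\,g_\sigma(x)$.

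For the evenness of $p$ I would take $\ell_0=0$ and $\sigma_1=\sigma_2=\sigma$ (so $u$ is a symmetric tent); with $w:=\sigma H$ the master inequality collapses to $\int_{-w}^{w}g_\sigma\le\int_{c-w}^{c+w}g_\sigma$ for every admissible centre $c$ and all $\sigma,w>0$, i.e.\ the integral of $g_\sigma$ over an interval of length $2w$ is minimal when the interval is centred at the origin. Differentiating $c\mapsto\int_{c-w}^{c+w}g_\sigma$ at the interior minimum $c=0$ gives $g_\sigma(w)=g_\sigma(-w)$; since $g_\sigma$ is a strictly increasing function of $p(x)$ this forces $p(w)=p(-w)$ for $w\in(0,1)$, and then on $[-1,1]$ by continuity.

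For the two convexity statements I return to the master inequality with $\sigma_1,\sigma_2,\ell_0,a$ fixed and let $H\to0$: each of the four integrals is over an interval of length $O(H)$, so dividing by $H$ and passing to the limit, using $\sigma g_\sigma(x)=K(\sigma,x)$ and, on the left‑hand side, the evenness of $p$ (which gives $g_{\bar\sigma}(-\ell_0/2)=g_{\bar\sigma}(\ell_0/2)$), yields the pointwise inequality
\[
2\,K\bigl(\tfrac{\sigma_1+\sigma_2}{2},\ \tfrac{\ell_0}{2}\bigr)\ \le\ K(\sigma_1,a)+K(\sigma_2,a+\ell_0)\qquad\text{whenever }\sigma_1,\sigma_2>0,\ [a,a+\ell_0]\subset[-1,1].
\]
Now use $K(s,-x)=K(s,x)$ (evenness again) to replace $K(\sigma_1,a)$ by $K(\sigma_1,-a)$ and observe that $\ell_0/2$ is exactly the midpoint of $-a$ and $a+\ell_0$. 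Writing $z_1:=-a$, $z_2:=a+\ell_0$ the inequality reads $2K\bigl(\tfrac{\sigma_1+\sigma_2}{2},\tfrac{z_1+z_2}{2}\bigr)\le K(\sigma_1,z_1)+K(\sigma_2,z_2)$ for $z_1,z_2\in(-1,1)$ with $z_1+z_2>0$; applying it to $(-z_1,-z_2)$ and using evenness once more removes the sign constraint, so $K$ is midpoint convex on $(0,\infty)\times(-1,1)$, hence convex there and, by continuity, on $(0,\infty)\times[-1,1]$. Specializing to $\sigma_1=\sigma_2=\sigma$ and writing $K(\sigma,x)=\sigma\,e^{c_\sigma p(x)}$ with $c_\sigma=\tfrac12\ln(1+\sigma^{-2})\in(0,\infty)$, the same inequality says $e^{c_\sigma p}$ is midpoint convex for every $c_\sigma>0$; dividing the midpoint inequality by $c_\sigma$ and letting $c_\sigma\to0$ gives midpoint convexity, hence convexity, of $p$.

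The crux — and the reason a plain tent will not suffice — is the position matching in the last step: symmetrization always pushes mass toward $x=0$, so a localized tent would only produce information about $K$ and $p$ near the origin. The plateau length $\ell_0$ is precisely what survives the $H\to0$ limit and keeps the relevant point at $\ell_0/2\neq0$, while the previously established identity $K(s,-x)=K(s,x)$ is exactly what converts ``position $\ell_0/2$'' into ``midpoint of the two arbitrary positions $-a$ and $a+\ell_0$''. I expect the genuinely technical points to be only the level‑set bookkeeping that identifies $u^{*}$ as the stated trapezoid and the uniform control of the $o(H)$ remainders in the limiting step.
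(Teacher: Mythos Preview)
Your argument is correct and follows the same overall strategy as the paper: test $\I(u^*)\le\I(u)$ on trapezoids, pass to the limit in the height parameter to obtain the pointwise inequality
\[
K(\sigma_1,x_1)+K(\sigma_2,x_2)\ \ge\ K\bigl(\tfrac{\sigma_1+\sigma_2}{2},\tfrac{x_1-x_2}{2}\bigr)+K\bigl(\tfrac{\sigma_1+\sigma_2}{2},\tfrac{x_2-x_1}{2}\bigr),
\]
and then convert this, via evenness of $p$, into midpoint convexity of $K$. The substantive difference is in how evenness is obtained and in the order of the deductions. The paper sets $\sigma_1=\sigma_2$ in the displayed inequality, Taylor--expands in $1/s^2$ to reach $p(x_1)+p(x_2)\ge p(\frac{x_1-x_2}{2})+p(\frac{x_2-x_1}{2})$, and then invokes the cited Proposition~\ref{convProp} to get evenness and convexity of $p$ in one stroke; only afterwards does it substitute $-x_2$ for $x_2$ to read off convexity of $K$. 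You instead establish evenness first, by a separate and more elementary argument (the first-order condition $g_\sigma(w)=g_\sigma(-w)$ at the centred minimum of the tent integral), then use it to obtain convexity of $K$, and finally recover convexity of $p$ from convexity of $K(\sigma,\cdot)$ by the linearisation $c_\sigma\to0$, which is the same idea as the paper's Taylor step. Your ordering is self-contained and avoids the external lemma; the paper's ordering is shorter once that lemma is in hand.
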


To prove Theorem \ref{necessary} we need the following
\begin{prop}
\label{convProp}
{\rm \cite[Lemma 10]{BN-CV}.}
Let $p$ be a continuous function on $[-1, 1]$.
Suppose that
$$
\forall s, t \in [-1, 1] \qquad p(s) + p(t) \ge p\big(\frac {s - t}{2}\big) + p\big(\frac{t - s}{2}\big).
$$
Then $p$ is even and convex.
\end{prop}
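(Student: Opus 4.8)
\section*{Proof proposal}

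The plan is to split $p$ into its even and odd parts, $p=q+r$ with $q(x)=\frac{p(x)+p(-x)}{2}$ and $r(x)=\frac{p(x)-p(-x)}{2}$, and to show that $q$ is convex and $r\equiv0$; since $p$ is continuous this is exactly the assertion. The first observation is that the right-hand side of the hypothesis depends only on $q$, because $p(\frac{s-t}{2})+p(\frac{t-s}{2})=2q(\frac{s-t}{2})$. Thus the hypothesis becomes
$$
q(s)+q(t)+r(s)+r(t)\ge 2q\Big(\frac{s-t}{2}\Big)\qquad(s,t\in[-1,1]),
$$
and, replacing $(s,t)$ by $(-s,-t)$ and using that $q$ is even and $r$ is odd, we also get $q(s)+q(t)-r(s)-r(t)\ge 2q(\frac{s-t}{2})$.

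Adding these two inequalities yields $q(s)+q(t)\ge 2q(\frac{s-t}{2})$ for all $s,t$; replacing $t$ by $-t$ (and using that $q$ is even) this is precisely the midpoint convexity $q(s)+q(t)\ge 2q(\frac{s+t}{2})$, so by continuity $q$ is convex. Subtracting the two inequalities yields $|r(s)+r(t)|\le q(s)+q(t)-2q(\frac{s-t}{2})$; again replacing $t$ by $-t$ and using that $r$ is odd and $q$ is even, we obtain
$$
|r(s)-r(t)|\ \le\ q(s)+q(t)-2q\Big(\frac{s+t}{2}\Big)\ =:\ G(s,t)\qquad(s,t\in[-1,1]),
$$
where $G\ge0$ is the midpoint-convexity gap of the convex function $q$.

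It remains to prove $r\equiv0$. As $r$ is odd and continuous it is enough to show $r(a)=r(b)$ for arbitrary $a<b$ in $(-1,1)$. Subdivide $[a,b]$ into $n$ equal pieces with nodes $x_k=a+k\frac{b-a}{n}$; the triangle inequality together with the bound above gives
$$
|r(a)-r(b)|\ \le\ \sum_{k=1}^n G(x_{k-1},x_k).
$$
The crucial point is that this sum tends to $0$. Indeed, denoting by $q'$ the (nondecreasing, taken as one-sided) derivative of the convex function $q$ on $(-1,1)$, comparing the two chords meeting at the midpoint of $[x_{k-1},x_k]$ with the corresponding one-sided derivatives gives $G(x_{k-1},x_k)\le\frac{b-a}{2n}\big(q'(x_k^-)-q'(x_{k-1}^+)\big)$, and the right-hand sides telescope to at most $\frac{b-a}{2n}\big(q'(b^-)-q'(a^+)\big)\to0$. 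Hence $r(a)=r(b)$, so $r$ is constant; since $r(0)=0$ we get $r\equiv0$, and therefore $p=q$ is even and convex.

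The only genuine difficulty is this last telescoping estimate: a plain Lipschitz (or continuity) bound for $G(x_{k-1},x_k)$ is $O(1/n)$ per term and hence worthless after summing $n$ of them, so one really has to use that the one-sided derivative of a convex function is monotone in order to make the error contributions cancel. The rest is routine manipulation of the even/odd decomposition and the classical fact that a continuous midpoint-convex function is convex.
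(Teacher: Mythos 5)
Your proof is correct. Note that the paper itself does not prove this proposition --- it simply cites \cite[Lemma 10]{BN-CV} --- so there is no in-paper argument to compare against; what you have produced is a valid self-contained replacement. The even/odd decomposition $p=q+r$ is the natural move: the right-hand side of the hypothesis sees only $q$, the $\pm(s,t)$ symmetrization gives midpoint convexity of $q$ (hence convexity, by continuity), and subtraction gives $|r(s)-r(t)|\le q(s)+q(t)-2q(\frac{s+t}{2})$. You correctly identify the one genuinely delicate step, namely that a naive Lipschitz bound on the convexity gap is only $O(1/n)$ per term and useless after summing; your fix --- bounding each gap by $\frac{b-a}{2n}\bigl(q'(x_k^-)-q'(x_{k-1}^+)\bigr)$ and observing that these increments occupy disjoint subintervals of $[q'(a^+),q'(b^-)]$, hence sum to at most $q'(b^-)-q'(a^+)$ --- is sound, since the one-sided derivatives of a convex function are finite and monotone on the open interval. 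The conclusion $r\equiv{\rm const}=r(0)=0$ then follows from continuity and oddness.
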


\begin{proof}[Proof of Theorem \ref{necessary}]
Fix two points $-1 < x_1 < x_2 < 1$
and consider a finite piecewise linear function with nonzero derivative only in circumferences of $x_1$ and $x_2$.
Namely, for arbitrary $s, t > 0$ and a small enough $\eps > 0$, we set
$$
u_\eps(x) = \min\big( 2 \eps, ( \eps + s^{-1} (x - x_1))_+, ( \eps + t^{-1} (x_2 - x) )_+\big).
$$
Then
$$
u_\eps^*(x) = \min\big( 2 \eps, ( \eps + ( t + s )^{-1} ( x_2 - x_1 - 2 |x| ) )_+ \big).
$$

The sets where $u_\eps' = 0$ and $u_\eps^*{}' = 0$ have equal measures.
Therefore, the inequality $\I(u_\eps^*) \le \I(u_\eps)$ can be rewritten as follows:
\begin{multline*}
\int\limits_{ x_1 - s \eps }^{ x_1 + s \eps } \big( 1 + \frac 1 {s^2 } \big)^{\frac {p(x)} 2} dx
+ \int\limits_{ x_2 - t \eps }^{ x_2 + t \eps } \big( 1 + \frac 1 {t^2 } \big)^{\frac {p(x)} 2} dx \ge
\\ \ge \int\limits_{ \frac {x_1 - x_2} 2  - \frac {s + t}2 \eps }^{ { \frac {x_1 - x_2} 2 } + { \frac {s + t} 2 } \eps }
\Big( 1 + \frac{1}{ ( {\frac { s + t} 2 } )^2 } \Big)^{\frac {p(x)} 2} dx
     + \int\limits_{ {\frac {x_2 - x_1} 2 } - { \frac {s + t} 2 } \eps }^{ { \frac {x_2 - x_1} 2 } + { \frac {s + t} 2 } \eps }
     \Big( 1 + \frac{1}{ ( { \frac {s + t} 2 } )^2 } \Big)^{\frac {p(x)} 2} dx.
\end{multline*}

We divide this inequality by $2 \eps$ and push $\eps \to 0$.
This gives
\begin{multline}
\label{preConv}
s \Big( 1 + { \frac 1 {s^2} } \Big)^{\frac {p(x_1)} 2} + t \Big( 1 + { \frac 1 {t^2} } \Big)^{\frac {p(x_2)} 2}\\
\ge { \frac {s + t} 2 } \Big( 1 + \frac{1}{ ( { \frac {s + t} 2 } )^2 } \Big)^{\frac {p( { \frac {x_1 - x_2} 2 } )} 2}
  + { \frac {s + t} 2 } \Big( 1 + \frac{1}{ ( { \frac {s + t} 2 } )^2 } \Big)^{\frac {p( { \frac {x_2 - x_1} 2 } )} 2}.
\end{multline}

First, we put $s = t$ in (\ref{preConv}). This gives
\begin{equation}
\label{s_eq_t}
( 1 + { \frac 1 {s^2} } )^{\frac {p(x_1)} 2} + ( 1 + { \frac 1 {s^2} } )^{\frac {p(x_2)} 2}
\ge ( 1 + { \frac 1 {s^2} } )^{\frac {p( { \frac {x_2 - x_1} 2 } )}  2} + ( 1 + { \frac 1 {s^2} } )^{\frac {p( { \frac {x_1 - x_2}2 } )} 2}.
\end{equation}

We define $\sigma: = {\frac 1 {s^2}}$, apply Taylor's expansion at $\sigma = 0$ in (\ref{s_eq_t}) and arrive at
$$
\sigma p(x_1) + \sigma p(x_2) \ge \sigma p( {\frac { x_2 - x_1}2 } ) + \sigma p( {\frac { x_1 - x_2} 2 } ) + r(\sigma),
$$
where $r(\sigma) = o(\sigma)$ as $\sigma \to 0$.
Hence for any $x_1, x_2 \in [-1, 1]$ we have
$$
p(x_1) + p(x_2) \ge p( { \frac {x_2 - x_1} 2 } ) + p( { \frac {x_1 - x_2} 2 } ).
$$
By Proposition \ref{convProp}, $p$ is even and convex.

Finally we put $-x_2$ instead of $x_2$ in (\ref{preConv}). Since $p$ is even, we obtain
$K(s, x_1) + K(t, x_2) \ge 2 K( { \frac {s + t}2 }, {\frac { x_1 + x_2}2 } )$.
\end{proof}

\section{Proof of inequality $\I(u^*) \le \I(u)$}

In this section we show that necessary conditions from Theorem \ref{necessary} are sufficient as well.

\begin{lm}
\label{quasiConv}
Let $m$ be an even positive number, let $s_k > 0$ ($k = 1 \dots m$), and let $-1 \le x_1 \le \dots \le x_m \le 1$.
Suppose that $K(s, x)$ is even in $x$ and jointly convex in $s$ and $x$.
Then
\begin{equation}
\label{K_ineq}
\sum\limits_{k = 1}^{m} K(s_k, x_k) \ge
2 K\Big({\frac 1 2} \sum\limits_{k = 1}^{m} s_k, {\frac 1 2} \sum\limits_{k = 1}^{m} (-1)^k x_k\Big).
\end{equation}
\end{lm}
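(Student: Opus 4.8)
The plan is to induct on the even integer $m$. The base case $m=2$ is exactly the hypothesis that $K$ is jointly convex (together with evenness), since
$K(s_1,x_1)+K(s_2,x_2)\ge 2K\big(\tfrac{s_1+s_2}{2},\tfrac{x_1+x_2}{2}\big)$ and $\tfrac12(-x_1+x_2)=\tfrac12(x_2-x_1)$ is, up to sign, $\tfrac12(x_1+x_2)$ rescaled — more precisely one applies convexity at the two points $(s_1,-x_1)$ and $(s_2,x_2)$ and uses evenness in $x$ to replace $K(s_1,-x_1)$ by $K(s_1,x_1)$; the alternating signs $(-1)^k$ are precisely what make this work.

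For the inductive step, pass from $m$ to $m+2$. Given $s_1,\dots,s_{m+2}>0$ and the ordered points $-1\le x_1\le\cdots\le x_{m+2}\le 1$, the idea is to merge two of the terms into one and reduce to the case of $m$ terms, but one must be careful to preserve the ordering of the $x$'s, the evenness, and the alternating-sign bookkeeping. The natural move is to combine a consecutive pair so that after the merge the remaining $x$-values are still a nondecreasing list in $[-1,1]$ and the merged point carries the correct sign. Concretely, I would apply the $m=2$ convexity step to the pair of indices whose $x$-values are the two smallest in absolute value — i.e.\ the pair straddling (or nearest to) zero — replacing $K(s_j,x_j)+K(s_{j+1},x_{j+1})$ by $2K\big(\tfrac{s_j+s_{j+1}}{2},\tfrac{x_j+x_{j+1}}{2}\big)$, or rather by $2K$ evaluated at the average of $s$'s and the appropriately signed average $\tfrac12(-x_j+x_{j+1})$. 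One then needs to check that the new point $\tfrac12(x_{j+1}-x_j)$ lies in $[-1,1]$ (clear, since it is a convex-type combination of points in $[-1,1]$ after using evenness to flip signs), and that inserting it into the remaining list keeps everything nondecreasing — which is why one chooses the pair nearest zero. After the merge we have a list of $m$ points (counting the doubled $K$ as contributing its argument twice is not quite right; rather, the factor $2$ in front gets absorbed so that we genuinely have $m$ summands of the form $K(\cdot,\cdot)$ with the doubled value appearing, and then the induction hypothesis with the new data yields the bound $2K\big(\tfrac12\sum s_k,\tfrac12\sum(-1)^k x_k\big)$, the sums being unchanged because $s_j+s_{j+1}$ and the signed combination $-x_j+x_{j+1}$ are exactly what the alternating sum produces).

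The main obstacle — and the step to handle with care — is the sign/ordering bookkeeping: after merging a consecutive pair, the remaining indices get reindexed, and one must verify that the alternating signs $(-1)^k$ in the reduced problem still reproduce, when unwound, the original alternating sum $\sum_{k=1}^{m+2}(-1)^k x_k$, and simultaneously that the reindexed $x$-list remains monotone so that the induction hypothesis applies verbatim. I expect that choosing to collapse the pair adjacent to the origin (so that the merged value $\tfrac12(x_{j+1}-x_j)\ge 0$ is small and slots in without disturbing monotonicity, after possibly using evenness to reflect the left half of the list) makes all of this go through cleanly; the convexity and evenness of $K$ are used only through the single two-point inequality, and all the remaining work is the combinatorial verification that this local move is compatible with the global statement.
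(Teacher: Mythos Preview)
Your induction cannot close, and the obstruction is not just bookkeeping. First, the counting: when you replace $K(s_j,x_j)+K(s_{j+1},x_{j+1})$ by $2K\bigl(\tfrac{s_j+s_{j+1}}{2},\tfrac{x_{j+1}-x_j}{2}\bigr)$, you have either one term with a coefficient $2$ (which the inductive hypothesis does not allow) or, splitting it as two identical summands, still $m+2$ terms --- you never reach $m$. There is no way to ``absorb the factor $2$'' and land on $m$ unweighted summands.

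More fundamentally, the inequality \eqref{K_ineq} is \emph{false} for a general function that is merely even in $x$ and jointly convex. Take $K(s,x)=s^2$, $m=4$, $s_1=\dots=s_4=1$ and any admissible $x_k$: the left side is $4$ while the right side is $2\bigl(\tfrac12\cdot4\bigr)^2=8$. So any argument that uses only evenness and joint convexity --- which is all your proposal invokes --- is doomed for $m\ge4$. (Your $m=2$ step is correct and is indeed exactly convexity plus evenness; the trouble is strictly in the passage to larger $m$.)

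The paper's proof therefore uses more than the hypotheses listed in the lemma statement: it exploits the explicit form $K(s,x)=s(1+s^{-2})^{p(x)/2}$. Setting $M(s,x)=K(s,x)-s$, one has (i) $M\ge0$, (ii) $M$ is decreasing in $s$ (because $M$ is convex in $s$ and $M_s\to0$ as $s\to\infty$), and (iii) $M$ is nondecreasing in $|x|$ (because $p$ is even and convex). With these three monotonicity/sign facts the argument is a one-line chain: drop all but the two extreme terms $M(s_1,x_1)+M(s_m,x_m)$ using $M\ge0$; apply the $m=2$ convexity--evenness step to those two; then enlarge the $s$-argument to $\tfrac12\sum_k s_k$ using (ii); finally shrink the $x$-argument from $\tfrac{x_m-x_1}{2}$ to $\tfrac12\sum_k(-1)^k x_k$ using (iii) and the telescoping observation $x_m-x_1\ge\sum_k(-1)^k x_k\ge0$. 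The subtraction of $s$ is harmless because both sides of \eqref{K_ineq} lose the same quantity $\sum_k s_k$.

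In short: the missing idea is to pass to $M=K-s$ and use its sign and monotonicity, not to induct on $m$.
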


\begin{proof}
Note that inequality (\ref{K_ineq}) is equivalent to the same inequality for the function $M(s, x) = K(s, x) - s$.
Also note that $M$ decreases in $s$ since $M$ is convex in $s$ and
$$
M_s(s, x) = (1 + {\frac 1 {s^2}})^{{\frac {p(x)} 2} - 1} (1 + {\frac 1 {s^2}} - {\frac {p(x)}{ s^2}}) - 1 \rightarrow 0 \qquad {\rm as} \quad s \to \infty.
$$
Therefore,
\begin{multline*}
\sum\limits_{k = 1}^{m} M(s_k, x_k)
\ge M(s_1, x_1) + M(s_m, x_m)
\overset{a}{\ge} 2 M\big({\frac {s_1 + s_m} 2}, {\frac {x_m - x_1} 2}\big) \ge \\
\overset{b}{\ge} 2 M\Big({\frac 1 2} \sum\limits_{k = 1}^{m} s_k, {\frac {x_m - x_1} 2}\Big)
\overset{c}{\ge} 2 M\Big({\frac 1 2} \sum\limits_{k = 1}^{m} s_k, {\frac 1 2} \sum\limits_{k = 1}^{m} (-1)^k x_k\Big).
\end{multline*}
Here (a) follows from evenness of $M$ in $x$ and its convexity,
(b) follows from decreasing of $M$ in $s$,
(c) follows from increasing of $M$ in $x \ge 0$.
\end{proof}

\begin{lm}
\label{linear}
Suppose that $K(s, x)$ is even in $x$ and jointly convex in $s$ and $x$.
Then $\I(u^*) \le \I(u)$ for any piecewise linear nonnegative function $u \in \Wf[-1, 1]$.
\end{lm}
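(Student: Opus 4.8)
The plan is to reduce the statement for a general piecewise linear $u$ to the discrete inequality of Lemma~\ref{quasiConv}. First I would describe a piecewise linear nonnegative $u \in \Wf[-1,1]$ by a partition $-1 = y_0 < y_1 < \dots < y_N = 1$ on whose pieces $u$ is affine with slope $\sigma_j = u'|_{(y_{j-1}, y_j)}$; on the intervals where $\sigma_j = 0$ the integrand of $\I$ equals $1$ and the total length of such intervals is preserved by symmetrization, so those pieces contribute equally to $\I(u)$ and $\I(u^*)$ and may be ignored. What remains is to control the contribution of the intervals with $\sigma_j \ne 0$.

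The key device is the "co-area / layer cake" slicing in the range of $u$. For a regular value $t$ of $u$, the level set $\{u > t\}$ is a finite union of open intervals, and (since $u$ is piecewise linear, for all but finitely many $t$) its boundary consists of finitely many points $z_1(t) < z_2(t) < \dots < z_{m(t)}(t)$ with $m(t)$ even; at $z_k(t)$ the function $u$ has slope $\pm s_k(t)^{-1}$ for some $s_k(t) > 0$, with signs alternating $+,-,+,-,\dots$ reading left to right. Meanwhile $\{u^* > t\}$ is a single interval $[-r(t), r(t)]$ whose two endpoints carry slopes determined by the chain rule applied to $u^*$; I would show (this is the standard computation behind Pólya--Szegő) that the half-length $r(t)$ and the two boundary slopes of $u^*$ at level $t$ are exactly reproduced by taking $s = \tfrac12\sum_k s_k(t)$ in the sense that $\I$'s integrand, after the change of variables $x \mapsto u$, becomes $K(s_k(t), z_k(t))\, dt$ summed over $k$ on the $u$-side and $2 K\big(\tfrac12\sum_k s_k(t),\ \text{(center)}\big)\, dt$ on the $u^*$-side. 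More precisely, on each maximal interval where $u$ is increasing with slope $\sigma = 1/s$, substituting $t = u(x)$ gives $dx = s\, dt$ and $(1 + \sigma^2)^{p(x)/2}\, dx = s(1 + s^{-2})^{p(x)/2}\, dt = K(s, x)\, dt$, and symmetrically on decreasing pieces; summing over all monotone pieces of $u$ at height $t$ produces $\sum_{k=1}^{m(t)} K(s_k(t), x_k(t))$ as the integrand in $t$, and the analogous computation for $u^*$ (which has exactly one increasing and one decreasing branch) yields $2 K\big(\tfrac12 \sum_k s_k(t),\ c(t)\big)$ where $c(t) = \tfrac12\sum_k (-1)^k x_k(t)$ is the signed combination appearing in Lemma~\ref{quasiConv}. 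One checks $r(t) = \tfrac12\sum_k s_k(t)\cdot(\text{something})$; what matters is that the $s$-argument of $K$ on the $u^*$-side is $\tfrac12\sum_k s_k(t)$.

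With that representation in hand, $\I(u) = \int_0^{\sup u}\sum_{k=1}^{m(t)} K(s_k(t), x_k(t))\, dt + (\text{contribution of flat parts})$ and likewise $\I(u^*)$, so the desired inequality $\I(u^*) \le \I(u)$ follows by applying Lemma~\ref{quasiConv} to the integrand for each fixed regular value $t$ (with $m = m(t)$, $s_k = s_k(t)$, $x_k = x_k(t)$, after sorting the $x_k(t)$ in increasing order, which does not change the sums and matches the hypothesis $x_1 \le \dots \le x_m$), then integrating in $t$ over $(0, \sup u)$; the flat-part contributions cancel as noted. The hypothesis that $K$ is even in $x$ and jointly convex is exactly what Lemma~\ref{quasiConv} consumes.

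The main obstacle is the bookkeeping of the change of variables and the verification that the boundary slopes and half-length of $u^*$ at level $t$ are governed by $\tfrac12\sum_k s_k(t)$ and the center $c(t)$ — i.e., that the two integrands match term by term after slicing. This requires care about the (finitely many) non-regular values $t$ of $u$ where the combinatorial structure of the level set changes, and about ordering the points $z_k(t)$ consistently so that the alternating-sign pattern of slopes is $+,-,+,-,\dots$; but since there are only finitely many such bad $t$ and each monotone branch is genuinely affine, the change of variables is elementary on each good subinterval of the range, and the argument goes through. I would also remark that passing from piecewise linear $u$ to general $u \in \Wf[-1,1]$ by approximation is deferred (it is the subject of later sections), so Lemma~\ref{linear} is exactly the piecewise linear case as stated.
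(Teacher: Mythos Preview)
Your proposal is correct and essentially identical to the paper's proof: both perform the change of variables $x\mapsto t=u(x)$ to express $\I(u)-Z$ and $\I(u^*)-Z$ as integrals over the range of $u$ of sums of $K$-terms, then apply Lemma~\ref{quasiConv} pointwise in $t$ (the paper organizes this by partitioning the range into intervals $U_j$ on which the preimage combinatorics is fixed, which is just your ``regular values'' bookkeeping in slightly different packaging). One terminological slip worth flagging: your $c(t)=\tfrac12\sum_k(-1)^k x_k(t)$ is not a ``center'' but the half-length of $\{u>t\}$, i.e., the location $r(t)=(u^*)^{-1}(t)$ of the right endpoint of $\{u^*>t\}$ --- which is exactly the $x$-argument that must be fed to $K$ on the $u^*$-side, so your formulas are right even though the label is off.
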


\begin{proof}
Denote by $L \subset [-1, 1]$ the set of nodes of $u$ (including the endpoints of the segment).
Take $U = u([- 1, 1]) \setminus u(L) $, i.e. $U$ is the image of $u$ without images of the nodes.
This set is a union of a finite number of disjoint intervals $U = \cup_j U_j$.
Note that for each $j$ the set $u^{-1}(U_j)$ is a union of an even number (say, $m_j$) of disjoint intervals.
Moreover, $u$ coincides with some linear function $y^j_k$, $k = 1, \dots, m_j$, on each interval.
Without loss of generality, we assume that the supports of $y^j_k$ are ordered by $k$ for any $j$,
that is $\sup dom(y^j_k) \le \inf dom(y^j_{k + 1})$.
Denote $b^j_k = |y^j_k{}'(x)|$ and
$$
Z = {\rm meas}\{ x \in (-1, 1) | u'(x) = 0 \} = {\rm meas}\{ x \in (-1, 1) | u^*{}'(x) = 0 \}.
$$
Then
\begin{multline*}
\I(u) - Z = \sum\limits_j \int\limits_{u^{-1}(U_j)} (1 + u'^2(x))^{\frac {p(x)} 2} dx
= \sum\limits_j \sum\limits_k \int\limits_{dom(y^j_k)} (1 + {y^j_k}'^2(x))^{\frac {p(x)} 2} dx =
\\ = \sum\limits_j \int\limits_{U_j} \sum\limits_k {\frac 1 {b^j_k}} (1 + b^j_k{}^2)^{\frac {p((y^j_k)^{-1}(y))} 2} dy
= \sum\limits_j \int\limits_{U_j} \sum\limits_k K\Big({\frac 1 {b^j_k}}, (y^j_k)^{-1}(y)\Big) dy.
\end{multline*}

Any point $y \in U$ has two symmetrical preimages with respect to function $u^*$,
therefore we can define $(u^*)^{-1}: U \to [0, 1]$.
Thus, for each $j$ we have
\begin{eqnarray*}
(u^*)^{-1} (y) & = & {\frac 1 2} \sum\limits_{k = 1}^{m_j} (-1)^k (y^j_k)^{-1}(y); \\
|((u^*)^{-1})'(y)| & = & {\frac 1 {|u^*{}'((u^*)^{-1}(y))|}} = {\frac 1 2} \sum\limits_{k = 1}^{m_j} {\frac 1 {b^j_k}} =: { \frac 1 {b_j^*} }.
\end{eqnarray*}
Since $u^*$ is even, we obtain
\begin{multline*}
\I(u^*) - Z = 2 \int\limits_{(u^*)^{-1}(U)} (1 + u^*{}'^2(x))^{p(x) \over 2} dx =
\\ = 2 \int\limits_U |((u^*)^{-1})'(y)| \cdot \big( 1 + { 1 \over ((u^*)^{-1})'(y)^2 } \big)^{p((u^*)^{-1}(y)) \over 2} dy =
\\ = 2 \sum\limits_j \int\limits_{U_j} {1 \over b_j^*} \big(1 + b_j^*{}^2 \big)^{{1 \over 2} p\big({1 \over 2} \sum\limits_{k = 1}^{m_j} (-1)^k (y^j_k)^{-1}(y)\big)} dy =
\\ = 2 \sum\limits_j \int\limits_{U_j} K\Big({1 \over 2} \sum\limits_{k = 1}^{m_j} {1 \over b^j_k}, {1 \over 2} \sum\limits_{k = 1}^{m_j} (-1)^k (y^j_k)^{-1}(y)\Big) dy.
\end{multline*}
To complete the proof, it is sufficient to show the following inequality for each $j$ and $y \in U_j$:
$$
\sum\limits_{k = 1}^{m_j} K\Big({1 \over b^j_k}, (y^j_k)^{-1}(y)\Big) \ge
2 K\Big({1 \over 2} \sum\limits_{k = 1}^{m_j} {1 \over b^j_k}, {1 \over 2} \sum\limits_{k = 1}^{m_j} (-1)^k (y^j_k)^{-1}(y)\Big).
$$
But this inequality is provided by Lemma \ref{quasiConv}.
\end{proof}

Now we can prove the result in general case.

\begin{thm}
\label{mainThm}
Suppose that $p$ is even and $K$ is jointly convex in $s$ and $x$.
Then for any nonnegative function $u \in \Wf[-1, 1]$ the inequality $\I(u^*) \le \I(u)$ holds.
\end{thm}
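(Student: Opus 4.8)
The plan is to obtain the general inequality from the piecewise linear case (Lemma \ref{linear}) by an approximation argument, combined with a lower-semicontinuity property of the functional $\I$ with respect to the relevant mode of convergence. First I would reduce to the case $\I(u) < \infty$, since otherwise there is nothing to prove. Given a nonnegative $u \in \Wf[-1,1]$ with $\I(u) < \infty$, I would construct a sequence $u_n$ of nonnegative piecewise linear functions in $\Wf[-1,1]$ such that $u_n \to u$ in $W_1^1(-1,1)$ (and, after passing to a subsequence, $u_n' \to u'$ a.e.), with $\I(u_n) \to \I(u)$. The natural choice is the piecewise linear interpolant of $u$ on a uniform mesh; because $(1+|u'|^2)^{p(x)/2}$ is bounded above by $2^{(\max p)/2}(1 + |u'(x)|^{\max p})$ and dominated convergence does not directly apply (the exponent is not $1$), I would instead use that the linear interpolant does not increase $\int |v'|$ on each subinterval in an averaged sense and invoke convexity of $r \mapsto (1+r^2)^{p(x)/2}$ via Jensen on each mesh cell to get $\I(u_n) \le \I(u)$, while a.e.\ convergence of $u_n'$ plus Fatou gives $\I(u) \le \liminf \I(u_n)$; together these yield $\I(u_n) \to \I(u)$.

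Next I would control the symmetrizations. The key classical fact is that symmetrization is continuous in $L^1$ (indeed $\|u_n^* - u^*\|_{L^1} \le \|u_n - u\|_{L^1}$, a standard nonexpansivity property of the rearrangement), so $u_n^* \to u^*$ in $L^1(-1,1)$. Combining with the uniform bound $\I(u_n^*) \le \I(u_n) \le \I(u) < \infty$ from Lemma \ref{linear}, the sequence $u_n^*$ is bounded in $\Wf[-1,1]$; hence a subsequence converges weakly in $W_1^1$, and by uniqueness of the $L^1$-limit the weak limit is $u^*$. Then lower semicontinuity of $v \mapsto \I(v)$ with respect to weak $W_1^1$ convergence — which holds because the integrand $(t, \xi) \mapsto (1+\xi^2)^{p(x)/2}$ is convex in $\xi$ and continuous, so this is a standard weak lower semicontinuity theorem for integral functionals — gives
\[
\I(u^*) \le \liminf_{n} \I(u_n^*) \le \liminf_{n} \I(u_n) = \I(u),
\]
which is the desired inequality.

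I expect the main obstacle to be the careful justification of the approximation step: one must ensure simultaneously that $u_n$ is piecewise linear and nonnegative with $u_n \in \Wf[-1,1]$ (i.e.\ vanishing at $\pm 1$), that $\I(u_n) \to \I(u)$ (not merely $\le$), and that the rearrangements $u_n^*$ do not degenerate. The nonexpansivity of symmetrization in $L^1$ and the weak-$W_1^1$ lower semicontinuity of convex integrands are classical and can be cited, so the technical weight falls on the mesh-interpolation estimate; using Jensen's inequality cellwise against the convexity of $(1+r^2)^{p(x)/2}$, together with the mild oscillation of $p$ on each small cell (handled by uniform continuity of $p$), should make $\limsup_n \I(u_n) \le \I(u)$ go through, and Fatou supplies the reverse.
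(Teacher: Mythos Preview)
Your overall architecture---approximate by piecewise linear functions, apply Lemma~\ref{linear}, and pass to the limit via weak lower semicontinuity of $\I$---is exactly the paper's strategy. However, two of your steps, as written, do not go through, and the paper handles both differently.

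\medskip
\textbf{Weak $W_1^1$ compactness.} You write ``bounded in $\Wf[-1,1]$; hence a subsequence converges weakly in $W_1^1$.'' This is false: $W_1^1$ is not reflexive, and a bounded sequence in $L^1$ of derivatives need only have a subsequence converging weakly-$*$ as measures, not weakly in $L^1$. Nonexpansivity of rearrangement in $L^1$ together with a total-variation bound is therefore not enough to conclude weak $W_1^1$ convergence of $u_n^*$ to $u^*$. The paper avoids this by quoting \cite[Theorem~1]{Br}, which states directly that strong convergence $u_n\to u$ in $\Wf$ implies weak convergence $u_n^*\rightharpoondown u^*$ in $\Wf$; this is a nontrivial property of the rearrangement and is precisely what is needed before invoking Tonelli-type lower semicontinuity. (If $\min p>1$ one can instead extract uniform integrability of $(u_n^*)'$ from the bound $\I(u_n^*)\le C$ via de~la~Vall\'ee--Poussin, but when $p$ attains the value $1$ this route is closed and Brock's theorem is essential.)

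\medskip
\textbf{The approximation step.} Your Jensen-per-cell argument for $\limsup_n \I(u_n)\le \I(u)$ requires controlling, cell by cell, the error coming from replacing $p(x)$ by an extremal value $p_i^\pm$ on the cell. The resulting error term is of order
\[
\omega_p(h)\int_{-1}^{1}(1+|u'|^2)^{p(x)/2}\ln\!\big(1+|u'|^2\big)\,dx,
\]
and the integral can be infinite even when $\I(u)<\infty$ (take $p\equiv 1$ and $u'$ such that $|u'|$ is integrable but $|u'|\ln|u'|$ is not). So ``uniform continuity of $p$'' does not rescue the estimate without an additional truncation layer. The paper sidesteps this entirely: it picks piecewise constant $v_n\to u'$ in the variable-exponent space $L^{p(x)}$ (density is cited from \cite{Shar}), takes $u_n$ a primitive, and then uses the elementary $1$-Lipschitz bound $\big|\sqrt{1+a^2}-\sqrt{1+b^2}\big|\le|a-b|$ to convert $v_n\to u'$ in $L^{p(x)}$ into $\I(u_n)\to\I(u)$ in one line. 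This is both shorter and avoids the logarithmic loss.

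\medskip
In summary: same skeleton, but you should replace your compactness claim by the citation of Brock's theorem, and either swap your interpolation scheme for the $L^{p(x)}$-density argument or insert a preliminary truncation of $u'$ to make the Jensen error genuinely small.
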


\begin{proof}
Without loss of generality, we can assume $I(u) < \infty$.
Since $p(x)$ is bounded, we can choose a sequence of piecewise constant functions $v_n$ converging to $u'$ in Orlicz space $L^{p(x)}[-1, 1]$,
see, e.g., \cite[Theorem~1.4.1]{Shar}. Denote by $u_n$ the primitive of $v_n$.
Changing $v_n$ if necessary we can assume without loss of generality that $u_n \ge 0$ and $u_n(\pm 1) = 0$.

By embedding $L^{p(x)}[-1, 1]\mapsto L^1[-1, 1]$ we have $u_n \to u$ in $\Wf[-1, 1]$.
Further, since $| \sqrt{ 1 + x^2 } - \sqrt{ 1 + y^2 } | \le | x - y |$ for all $x$ and $y$,
the relation $v_n \to u'$ in $L^{p(x)}$ implies $\I( u_n ) \to \I( u )$.

%

By \cite[Theorem 1]{Br}, the convergence $u_n \to u$ in $\Wf[-1, 1]$ implies weak convergence $u_n^* \rightharpoondown u^*$ in $\Wf[-1, 1]$.
By the Tonelli theorem, the functional $\I$ is sequentially weakly lower semicontinuous (see, e.g., \cite[Theorem 3.5]{BGH}).
Thus,
$$
\I(u^*) \le \liminf\limits_n \I(u_n^*) \le \lim\limits_n \I(u_n) = \I(u).
$$
\end{proof}

\section{On some sufficient conditions}
The condition of joint convexity of function $K$ is in fact certain assumption on function $p$. It is easy to see that
$\partial^2_{ss}K>0$, and $\partial^2_{xx}K\ge0$ since $p$ is convex. Thus, the convexity of $K$ is equivalent to $\det(K'')\ge0$ in the sense of distributions.
Direct calculation gives
\begin{multline*}
\det(K'') = \frac{(1 + w)^{q - 1}}{4}\\
\times\Big( w (w q + 1) (q + 1) \Ln(1 + w) (q'^2 \Ln(1 + w) + 2 q'') - q'^2 ( (1 - q w) \Ln(1 + w) - 2 w )^2 \Big),
\end{multline*}
where $q = q(x) = p(x) - 1$ and $w = w(s) = \frac{1}{s^2}$.

Thus, we arrive at the inequality for the function $q$
\begin{equation}
\label{qq''} q q'' \ge q'^2 {\cal A}(q)\equiv
q'^2\cdot\sup\limits_{w>0} A(w,q),
\end{equation}
where
$$
A(w, q) = \frac{
q ( 4 w - ( w + 3 ) \ln( w + 1 ) ) - {w - 1 \over w} \ln( w + 1 ) + 4 {w \over \ln( w + 1 )} - 4
}{
2 ( q w + 1 )
} \cdot {q \over q + 1}.
$$

The following statement is evident.

\begin{lm}
\label{conv}
Let $q\ge0$ is a continuous function on $[-1, 1]$. Then the inequality $q q'' \ge q'^2 \mathcal M$ in the sense of distributions with $\mathcal M\in(0,1)$ is equivalent to convexity
of the function $q^{1-\mathcal M}$.
\end{lm}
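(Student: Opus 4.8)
The plan is built around one elementary identity; everything else is routine regularity theory.

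\emph{The smooth case.} Assume first $q\in C^2$ with $q>0$ on $[-1,1]$. Writing $f=q^{1-\mathcal M}$ and differentiating twice,
$$
f'' = (1-\mathcal M)\,q^{-\mathcal M-1}\bigl(q\,q''-\mathcal M\,q'^2\bigr).
$$
Since $\mathcal M\in(0,1)$ and $q>0$, the prefactor $(1-\mathcal M)\,q^{-\mathcal M-1}$ is strictly positive, so $f''\ge0$ is equivalent to $q\,q''\ge\mathcal M\,q'^2$. This already proves the lemma when $q$ is smooth and bounded away from $0$; for $q$ merely positive on an interval one obtains the same conclusion there by mollifying $q$, applying the identity to the smooth strictly positive approximants, and passing to the limit (uniform convergence of $q_\eps^{-\mathcal M-1}$ and of $q_\eps$, weak-$*$ convergence of the measures $q_\eps''$ and $f_\eps''$, $L^1$ convergence of $q_\eps'^2$).

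\emph{The general case.} If $q^{1-\mathcal M}$ is convex, then $q=\bigl(q^{1-\mathcal M}\bigr)^{1/(1-\mathcal M)}$ is the composition of the convex nondecreasing map $t\mapsto t^{1/(1-\mathcal M)}$ with a convex function, hence $q$ is itself convex; in particular $q$ is locally Lipschitz, $q'^2\in L^1_{loc}$, and $q''$ is a nonnegative measure, so $q\,q''$ is a well-defined nonnegative measure. On $\{q>0\}$ the identity of the smooth case gives $q\,q''\ge\mathcal M\,q'^2$; on the interior of $\{q=0\}$ the inequality is the trivial $0\ge0$; and since $q$ vanishes on the Lebesgue-null set $\partial\{q>0\}$, the measure $q\,q''$ puts no mass there. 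Hence $q\,q''\ge\mathcal M\,q'^2$ in $\mathcal D'(-1,1)$. Conversely, if $q\,q''\ge\mathcal M\,q'^2$, then on $\{q>0\}$ one has $q''\ge\mathcal M\,q'^2/q\ge0$, so $q$ is convex on each component of $\{q>0\}$ and $\equiv0$ on each component of $\{q=0\}$, and such a continuous nonnegative function is globally convex (a positive convex function abutting a zero of $q$ is monotone toward it, so the one-sided derivatives agree there). Thus $q$ is convex, and the identity on $\{q>0\}$ together with $q^{1-\mathcal M}\equiv0$ on $\{q=0\}$ and the same gluing remark shows $q^{1-\mathcal M}$ is convex on $[-1,1]$.

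I expect the only real obstacle to be the behaviour at $\partial\{q>0\}$: one must exclude a concentrated mass of $q''$ --- or of $(q^{1-\mathcal M})''$ --- on that null set. As indicated, this is harmless on the $q$ side because $q\,q''$ annihilates $\{q=0\}$, and on the $q^{1-\mathcal M}$ side because convexity forces the matching of one-sided derivatives at a zero. (When $q$ is bounded away from $0$ --- the case actually used below --- the second and third paragraphs are unnecessary.)
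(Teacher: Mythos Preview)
Your argument is correct and rests on exactly the identity $f''=(1-\mathcal M)\,q^{-\mathcal M-1}\bigl(qq''-\mathcal M q'^2\bigr)$ that the paper has in mind --- the authors give no proof and simply declare the lemma ``evident.'' Your additional care with the zero set of $q$ and the distributional interpretation goes well beyond the paper (and beyond what is actually needed, since in the application $q=p-1$ with $p$ already shown convex); the one small slip is that at a boundary point of $\{q>0\}$ the one-sided derivatives need not \emph{agree} but only satisfy $f'_-\le 0\le f'_+$, which still suffices for the global convexity you want.
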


Now we can give a simple sufficient condition for the monotonicity of the functional $\cal I$ under symmetrization.

\begin{thm}
 Let $p(x)\ge1$ be an even continuous function on $[-1, 1]$.

 {\bf 1}. If the function $(p(x)-1)^{0.37}$ is convex then the inequality $\I(u^*) \le \I(u)$ holds for any nonnegative function $u\in\Wf[-1,1]$.

 {\bf 2}. If $p(x)\le 2.36$ for $x\in[-1,1]$ and the function $\sqrt{p(x)-1}$ is convex then the inequality $\I(u^*) \le \I(u)$ holds for any nonnegative function $u\in\Wf[-1,1]$.
 \end{thm}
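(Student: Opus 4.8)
The plan is to derive both statements from Theorem~\ref{mainThm} by verifying that the hypothesis ``$p$ even and $K$ jointly convex'' holds under each of the stated conditions. Since $p$ is assumed even in both cases, the only thing to check is joint convexity of $K(s,x)=s(1+s^{-2})^{p(x)/2}$. As noted in Section~4, $\partial^2_{ss}K>0$ always and $\partial^2_{xx}K\ge0$ because $p$ (hence $q=p-1$) is convex, so joint convexity is equivalent to $\det(K'')\ge0$, which by the displayed computation reduces to the differential inequality~(\ref{qq''}), namely $qq''\ge q'^2\,\mathcal A(q)$ with $\mathcal A(q)=\sup_{w>0}A(w,q)$. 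Thus everything comes down to bounding the quantity $\mathcal A(q)$ and then invoking Lemma~\ref{conv}.

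For part~\textbf{1}, the claim will follow once we show $\mathcal A(q)\le 0.63$ for all $q\ge0$: indeed, if $(p-1)^{0.37}=q^{0.37}$ is convex, then writing $0.37=1-0.63$ and applying Lemma~\ref{conv} with $\mathcal M=0.63$ gives $qq''\ge 0.63\,q'^2\ge q'^2\mathcal A(q)$, which is precisely~(\ref{qq''}); then Theorem~\ref{mainThm} applies. So the real content is the \emph{uniform} estimate
\[
\sup_{q\ge0}\ \sup_{w>0}\ A(w,q)\ \le\ 0.63 .
\]
For part~\textbf{2}, one restricts to $q=p-1\in[0,1.36]$ (the range forced by $p\le 2.36$) and shows the \emph{restricted} estimate $\sup_{0\le q\le 1.36}\sup_{w>0}A(w,q)\le 0.5$; then convexity of $\sqrt{p-1}=q^{1/2}$ together with Lemma~\ref{conv} (with $\mathcal M=0.5$) yields $qq''\ge 0.5\,q'^2\ge q'^2\mathcal A(q)$, and again Theorem~\ref{mainThm} finishes the argument. (One should also record the degenerate possibilities: where $q\equiv0$ or $q$ is not twice differentiable, the inequality~(\ref{qq''}) is interpreted distributionally exactly as in Lemma~\ref{conv}, and convexity of $q^{1-\mathcal M}$ is the right hypothesis there too.)

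The main obstacle is purely the two numerical bounds $\mathcal A(q)\le 0.63$ and $\mathcal A(q)\le 0.5$ on $[0,1.36]$, i.e. estimating the two-variable supremum of the explicit but unwieldy function $A(w,q)$. The natural route is: first analyze the behaviour of $A(w,q)$ as $w\to0^+$ and $w\to\infty$ (Taylor/asymptotic expansion of the logarithmic terms) to see that the supremum over $w$ is attained at an interior point and is finite; then, for fixed $w$, note that $q\mapsto A(w,q)$ is monotone in a controllable way (the factor $q/(q+1)$ is increasing, so the worst case in part~1 is $q\to\infty$, giving a cleaner one-variable function of $w$ to bound, while in part~2 one evaluates at the endpoint $q=1.36$); and finally bound the resulting one-variable functions of $w$ by elementary estimates for $\ln(w+1)$. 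This is exactly the kind of verification the authors defer to Section~6 (Appendix), so I would state the two numerical lemmas here and refer to the Appendix for their ``numerical-analytical'' proof.
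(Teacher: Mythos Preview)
Your proposal is correct and matches the paper's proof essentially verbatim: reduce to Theorem~\ref{mainThm} via the differential inequality~(\ref{qq''}), invoke Lemma~\ref{conv} with $\mathcal M=0.63$ (respectively $\mathcal M=0.5$), and defer the two numerical bounds $\sup_{q\ge0}\mathcal A(q)\le0.63$ and $\sup_{0\le q\le1.36}\mathcal A(q)\le0.5$ to the Appendix. The only minor discrepancy is your sketched ``natural route'' for the Appendix (monotonicity in $q$ reducing to a one-variable problem): the paper's Appendix instead uses a brute-force interval-arithmetic argument, bounding $A(w,q)$ by explicit piecewise-constant majorants on a finite grid of rectangles, with the monotonicity-in-$q$ reduction used only on one subregion.
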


\begin{proof}
The following inequalities are proved in Appendix:
\begin{eqnarray}
\label{calc}
&& \sup\limits_{q\ge0}{\cal A}(q)=\limsup\limits_{q\to+\infty}{\cal A}(q)\le0.63;\\
\label{calcHalf}
&& \sup\limits_{0\le q\le1.36}{\cal A}(q)\le0.5.
\end{eqnarray}
By Lemma \ref{conv} the inequality (\ref{qq''}) holds in both cases {\bf 1} and {\bf 2}. Theorem \ref{mainThm} completes the proof.
\end{proof}

\section{Multidimensional case}

In this section we assume that $\Omega=\omega\times (-1,1)$ is a cylindrical domain in $\Real^n$, and $x=(x',y)$ where $x'\in\omega$, $y\in(-1,1)$.
For $u\in\Wf(\Omega)$ we denote by $u^*$ the Steiner symmetrization of $u$ along $y$ that is one-dimensional symmetrization w.r.t. $y$ for every $x'$.

We introduce a multidimensional analogue of the functional $\I$:
$$
\widehat\I(u)=\int\limits_{\Omega} ( 1 + | \nabla u(x) |^2 )^{\frac {p(x)}{2}} dx.
$$


\begin{thm}
Suppose that $\widehat\I(u^*) \le \widehat\I(u)$ for any nonnegative function $u \in \Wf(\Omega)$.
Then $p(x',y)$ is independent on $y$.
\end{thm}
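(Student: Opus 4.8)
The plan is to argue by contradiction: assuming that $p$ genuinely depends on $y$, I would construct a test function $u\in\Wf(\Omega)$ with $\widehat\I(u^*)>\widehat\I(u)$.

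I would start with a reduction by ``tent'' test functions. For $c\in(-1,1)$, $\alpha>0$ and small $\eps>0$ put $u_{\alpha,\eps,c}(x',y)=\alpha\,(\eps-|y-c|)_+\,\chi_\eps(x')$, where $\chi_\eps$ is a cut-off equal to $1$ outside an $\eps$-neighbourhood of $\partial B$ for a fixed ball $B$ with $\overline B\subset\omega$, vanishing on $\partial B$, with $|\nabla\chi_\eps|\le C/\eps$. Then $u_{\alpha,\eps,c}\in\Wf(\Omega)$, its Steiner symmetrization is $\alpha(\eps-|y|)_+\chi_\eps(x')$, and since $|\nabla u|^2$ differs from $\alpha^2\chi_\eps^2$ only on the $\eps$-layer near $\partial B$, the contribution of the cut-off gradient to $\widehat\I$ is $O(\eps^2)$. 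Dividing $\widehat\I(u^*)\le\widehat\I(u)$ by $2\eps$ and letting $\eps\to0$ gives $\int_B(1+\alpha^2)^{p(x',0)/2}\,dx'\le\int_B(1+\alpha^2)^{p(x',c)/2}\,dx'$; since $B$ is arbitrary and $1+\alpha^2>1$, Lebesgue differentiation yields the pointwise bound $p(x',0)\le p(x',c)$ for all $x'\in\omega$ and $c\in(-1,1)$, so $p(x',0)=\min_{[-1,1]}p(x',\cdot)$. By analogous but more careful choices of test functions (even in $y$, or one-dimensional profiles in $y$ supported in a thin $x'$-slab, with the cut-off handled by scaling the $y$-profile) one expects to recover that $p(x',\cdot)$ is even and satisfies the conditions of Theorem \ref{necessary}; in particular, if $p$ is not $y$-independent, then for some $x_0'$ the function $p(x_0',\cdot)$ is a non-constant even convex function.

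The crux is to obtain the contradiction from a test function in which the term $|\nabla_{x'}u|^2$ is essential --- which is precisely what is absent in one dimension. Here I would take $u$ whose slices $u(x',\cdot)$, for $x'$ near $x_0'$, are asymmetric trapezoidal profiles with steep flanks of width $\sim\eps$ and with plateau whose location and/or width depends on $x'$; then $u^*$ recentres the plateau at $\{y=0\}$ and merges the flanks, but --- unlike for a pure tent --- the $x'$-dependence of the plateau forces $\nabla_{x'}u^*\neq0$ on the thin flanks of $u^*$, which sit near $\{|y|=\text{half-width}\}$. Choosing the half-width close to a level $y_0$ with $p(x_0',y_0)>p(x_0',0)$ and adjusting the flank slopes, the flanks of $u^*$ are weighted by the larger value $p(\cdot,y_0)$ and carry an extra cost $|\nabla_{x'}u^*|^2$, whereas the flanks of $u$ can be placed where $p$ is smaller; passing to the limit as $\eps\to0$ and then expanding in the large-slope parameter $s^{-2}\to0$ as in the proof of Theorem \ref{necessary}, the surviving inequality is a pointwise relation for $K(s,x)=s(1+s^{-2})^{p(x)/2}$, now with $|\nabla_{x'}u|$ inserted, which fails unless $p(x_0',\cdot)$ is constant.

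The main obstacle is the construction, with three interlocking difficulties. First, the condition $u\in\Wf(\Omega)$ forces an $x'$-cut-off whose gradient contributes a term which is a negative power of the transition-layer width to both $\widehat\I(u)$ and $\widehat\I(u^*)$; one must arrange these large but essentially equal contributions to cancel, or scale the $y$-profile to be $O(\eps)$ and let $\eps$ tend to $0$ faster than the layer width, so they become negligible. Second, the Steiner symmetrization of an $x'$-dependent profile must be computed explicitly and the measures of $\{\nabla u\neq0\}$ and $\{\nabla u^*\neq0\}$ must be matched after integration in $x'$, to keep the ``flat'' part of $\widehat\I$ balanced. Third --- and this is the heart of the matter --- one must check that the insertion of $|\nabla_{x'}u^*|^2$ genuinely destroys the joint-convexity mechanism behind Theorem \ref{mainThm} exactly when $p$ varies in $y$; this is the step in which the hypothesis $\widehat\I(u^*)\le\widehat\I(u)$ is used with full strength, and where the unexpected rigidity --- one extra dimension already suffices --- appears.
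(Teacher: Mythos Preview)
Your overall strategy---extract a pointwise convexity condition that now involves the transversal gradient $|\nabla_{x'}u|$, then show it is incompatible with any genuine $y$-dependence of $p$---is the right one, and matches the paper's plan. But both the construction and the endgame differ substantially from what the paper does, and your proposal leaves the decisive step as an assertion.

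The paper's test function is simpler than your $x'$-dependent plateau: near the two ``bases'' $x_1=(x_0',y_1)$, $x_2=(x_0',y_2)$ one takes $u$ equal to the minimum of two \emph{tilted} affine functions $(y-y_1)/c_1+(x'-x_0')\cdot\B_1'$ and $(y_2-y)/c_2+(x'-x_0')\cdot\B_2'$, a constant, and a lateral cut-off. The tilt vectors $\B_i'=(d_i/c_i)\,{\bf e}$ inject the transversal gradient directly, with no need to make the plateau depend on $x'$; after Steiner symmetrization the base gradients become $((c_1\B_1'+c_2\B_2')/(c_1+c_2),\pm 2/(c_1+c_2))$, and letting the cylinder radius tend to zero yields the clean midpoint inequality
\[
\K_{x_0'}(c_1,d_1,y_1)+\K_{x_0'}(c_2,d_2,y_2)\ \ge\ 2\,\K_{x_0'}\Big(\tfrac{c_1+c_2}{2},\tfrac{d_1+d_2}{2},\tfrac{y_2-y_1}{2}\Big),
\quad
\K_{x'}(c,d,y)=c\Big(1+\tfrac{1+d^2}{c^2}\Big)^{p(x',y)/2},
\]
so $\K_{x_0'}$ is jointly convex in $(c,d,y)$. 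Your moving-plateau idea might in principle lead to the same inequality, but the Steiner rearrangement of an $x'$-varying trapezoid is awkward to compute explicitly, and you have not isolated what pointwise relation it produces.

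The step you label ``the heart of the matter'' is not a check but the whole proof, and it is not done in your proposal. The paper writes $\K_{x'}(c,d,y)=\sqrt{1+d^2}\,K\!\big(c/\sqrt{1+d^2},\,y\big)$ with $K(s,y)=s(1+s^{-2})^{p/2}$ and computes
\[
\det\bigl(\K_{x'}''\bigr)=\frac{1}{(1+d^2)^{3/2}}\Big[(\partial^2_{yy}K\,\partial^2_{ss}K-(\partial^2_{sy}K)^2)(K-s\partial_sK)-\partial^2_{ss}K\,(\partial_yK)^2\,d^2\Big].
\]
The coefficient of $d^2$ is strictly negative wherever $\partial_yK\ne0$ (since $\partial^2_{ss}K>0$), so for large $d$ the determinant is negative and $\K_{x'}$ fails to be convex---a contradiction unless $\partial_yK\equiv0$, i.e. $p$ is $y$-independent. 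This explicit Hessian computation is exactly the ``rigidity'' you allude to; without it your outline stops short of the conclusion.
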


\begin{proof}
First of all, we claim, similar to Theorem \ref{necessary}, that
$p$ should be even and convex in $y$, and the function
$$
\K_{x'}(c, d, y) = c \big({\textstyle1 + {1 + d^2 \over c^2}}
\big)^{p(x', y) \over 2}
$$
should be jointly convex on $(-1,1)\times\Real\times\Real_+$.

Indeed, consider two points $x_1 = (x_0', y_1), x_2 = (x_0', y_2)$ ($x_0'
\in \omega, -1 < y_1 < y_2 < 1$). We construct a function $u \in
\Wf(\Omega)$ with nonzero gradient only in circumferences of $x_1,
x_2$ and around the lateral boundary of a cylinder with axis
$[x_1, x_2]$. Namely,
$$
u(x) = \min\Big(
  \big( {y - y_1 \over c_1} + (x' - x_0') \cdot \B_1' \big)_+,
  \big( {y_2 - y \over c_2} + (x' - x_0') \cdot \B_2' \big)_+,
  \delta \big( w - |x' - x_0'| \big)_+, h
\Big).
$$
Here the parameters $c_1, c_2 > 0$ are inverse derivatives w.r.t. $y$ at the ``bases'' of the cylinder,
$\B_1', \B_2' \in \Real^{n - 1}$ are gradients w.r.t. $x'$ at the ``bases'',
$\delta > 0$ is the absolute value of the gradient at the lateral boundary, $w > 0$ is the cylinder radius,
while $h > 0$ is the maximal function value.

Given $y_1$, $y_2$, $c_1$, $c_2$, $\B_1'$, $\B_2'$, we choose $h$ and $\delta$ as functions of small parameter $w$.
We set $\varkappa\equiv {h \over \delta} := {w \over 2}$ ($\varkappa$ is the width of the lateral layer with non-zero gradient).

The left base of the support of $u$ is given by the system
$$
{y - y_1 \over c_1} + (x' - x_0') \cdot \B_1' = 0; \qquad |x' - x_0'| \le w.
$$
Thus, it is an $(n-1)$-dimensional prolate ellipsoid of revolution
with the major semi-axis $\sqrt{w^2 + c_1^2 w^2 |\B_1'|^2}$ and
radius $w$.
Therefore, $\nabla w = (\B_1', \frac{1}{c_1})$ on the set $A_1$ that is a truncated cone based on this ellipsoid.
Direct computation gives
$$
|A_1| = 
C_1 \delta c_1 w^n
$$
(hereinafter $C$ with or without indices are absolute constants).

Similarly, $\nabla w = (\B_2', -\frac{1}{c_2})$ on the set $A_2$ with $|A_2| = C_1 \delta c_2 w^n$.

After symmetrization the gradients of $u^*$ at the ``bases'' are equal
to $(\frac{c_1 \B_1' + c_2 \B_2'}{c_1 + c_2}, \pm \frac{2}{c_1 + c_2})$.
Hence the sets $A_1$ and $A_2$ become $A_1'$ and $A_2'$
with $|A_1'| = |A_2'| = C_1 \delta \frac{c_1 + c_2}{2} w^n$.

Next, denote by $A_\delta$ the lateral layer with non-zero
gradient. Direct estimate gives
$$
|A_\delta|
\le C\big((y_2 - y_1) w^{n - 1} + (c_1 |\B_1'| + c_2 w
|\B_2'|)w^n\big).
$$
Also denote
$$ Z = {\rm meas}\{ x \in \Omega \,|\, \nabla u(x) = 0 \} = {\rm
meas}\{ x \in \Omega \,|\, \nabla u^*(x) = 0 \}.
$$


Setting $\frac{1}{c_1} = \frac{1}{c_2} = w^2$, $\delta = w^4$ and
$\B_1' = \B_2' = 0$, the theorem assumptions imply
\begin{eqnarray*}
0 &\le& (\widehat\I(u) - Z ) - (\widehat\I(u^*) - Z )
\\ &\le& \big( (1 + w^4)^{\frac{p(\bar{x}_1)}{2}} - 1 \big) |A_1| + \big( (1 + w^4)^{\frac{p(\bar{x}_2)}{2}} - 1 \big) |A_2|
+ \big( (1 + w^8)^{\frac{P}{2}} - 1 \big) |A_\delta|
\\ &&- \big( (1 + w^4)^{\frac{p(\hat{x}_1)}{2}} - 1 + (1 + w^4)^{\frac{p(\hat{x}_2)}{2}} - 1 \big) |A_1'|
\\ &\le& w^4 \big({\textstyle\frac{p(\bar{x}_1)}{2} + \frac{p(\bar{x}_2)}{2} + o_w(1)}\big) C_1 w^{n + 2}
+ w^8 \big({\textstyle\frac{P}{2} + o_w(1)} \big) (y_2 - y_1 ) C
w^{n-1}
\\ &&- w^4 \big({\textstyle\frac{p(\hat{x}_1)}{2} + \frac{p(\hat{x}_2)}{2} + o_w(1)}\big) C_1 w^{n + 2}.
\end{eqnarray*}
Here $P = \max p(x', y)$, $\bar{x}_1 \in A_1$, $\bar{x}_2 \in
A_2$, $\hat{x}_1 \in A_1'$, $\hat{x}_2 \in A_2'$.

We push $w$ to $0$ and arrive at the inequality
$$
0 \le p(x_0',y_1) + p(x_0', y_2) - p(x_0', \frac{y_1 - y_2}{2}) -
p(x_0', \frac{y_2 - y_1}{2}).
$$
Applying Proposition
\ref{convProp} we prove that $p$ is convex and even in $y$.

Now choose arbitrary positive $c_1$, $c_2$, $d_1$ and $d_2$, take
$\B_1'=\frac {d_1}{c_1}\, {\bf e}$, $\B_2'=\frac {d_2}{c_2}\, {\bf
e}$ (here ${\bf e}$ is arbitrary unit vector in $x'$ hyperplane) and
set $\delta = w^2$. Then we obtain
\begin{eqnarray*}
0 &\le& (\widehat\I(u) - Z) - (\widehat\I(u^*) - Z)
\\ &\le& \big( \K_{\bar{x}_1'}(c_1, d_1, \bar{y}_1)-1\big) |A_1| + \big(\K_{\bar{x}_2'}(c_2, d_2, \bar{y}_2)-1\big) |A_2|
+ \big( (1 + w^4)^{\frac{P}{2}} - 1 \big) |A_\delta|
\\ &&-\big( {\textstyle\K_{\hat{x}_1'}(\frac{c_1 + c_2}{2}, \frac{d_1 + d_2}{2}, \hat{y}_1)-1 + \K_{\hat{x}_2'}(\frac{c_1 + c_2}{2}, \frac{d_1 + d_2}{2},
\hat{y}_2)-1 }\big) |A_1'|
\\ &\le& \big( {\textstyle \K_{\bar{x}_1'}(c_1, d_1, \bar{y}_1) + \K_{\bar{x}_2'}(c_2, d_2, \bar{y}_2)} \big) C_1 w^{n + 2}
+ ({\textstyle\frac{P}{2} + o_w(1)}) C (y_2 - y_1) w^{n + 3}
\\ &&-\big( {\textstyle\K_{\hat{x}_1'}(\frac{c_1 + c_2}{2}, \frac{d_1 + d_2}{2}, \hat{y}_1) +\K_{\hat{x}_2'}(\frac{c_1 +c_2}{2},
\frac{d_1 + d_2}{2}, \hat{y}_2)} \big) C_1 w^{n + 2}.
\end{eqnarray*}
Here $P = \max p(x', y)$, $(\bar{x}_1', \bar{y}_1)\in A_1$,
$(\bar{x}_2', \bar{y}_2) \in A_2$, $(\hat{x}_1', \hat{y}_1) \in
A_1'$, $(\hat{x}_2', \hat{y}_2) \in A_2'$.

Pushing $w$ to $0$, we obtain
$$
\K_{x_0'}(c_1, d_1, y_1) + \K_{x_0'}(c_2, d_2, y_2) \ge
\K_{x_0'}\big({\textstyle \frac{c_1 + c_2}{2}, \frac{d_1 + d_2}{2},\frac{y_1 - y_2}{2}
}\big) + \K_{x_0'}\big({\textstyle \frac{c_1 + c_2}{2}, \frac{d_1 + d_2}{2}, \frac{y_2 -y_1}{2}}\big).
$$
This implies $\K_{x_0'}$ is convex since it is even in $y$, and the claim follows.

Finally, for the sake of brevity, we omit $x'$ in the notation $\K_{x'}$ and notice that
$$
\textstyle
\K(c,d,y)=K(\frac c{\sqrt{1+d^2}},y)\cdot \sqrt{1+d^2},
$$
where the function $K$ is introduced in Theorem \ref{necessary}. Direct calculation gives
$$
\det(\K''(c, d, y))=\frac 1{(1+d^2)^{\frac 32}}\cdot\big[(\partial^2_{yy}K\partial^2_{ss}K-\partial^2_{sy}K^2)(K-s\partial_sK)-\partial^2_{ss}K\partial_yK^2d^2\big]
$$
(here $s=\frac c{\sqrt{1+d^2}}$). Therefore, if $\partial_y K \not\equiv 0$ we can choose $d$ sufficiently large so that  $\det(\K''(c, d, y))<0$, a contradiction.
\end{proof}

\section{Appendix}

To prove inequality (\ref{calc}) we split the positive quadrant
$(w,q) \in \Real_+ \times \Real_+$ into five regions, see Fig.~1:
\begin{eqnarray*}
 &&R_1 = [0, 6] \times [0, 1],\quad R_2 =[0, 1] \times [1,\infty],
\quad R_3 = [1, 4] \times [1, \infty],\\
&&R_4 = [6,\infty]\times [0,1], \quad
R_5=[4,\infty]\times[1,\infty].
\end{eqnarray*}
 On each of these regions we prove the inequality in numerical-analytical way.

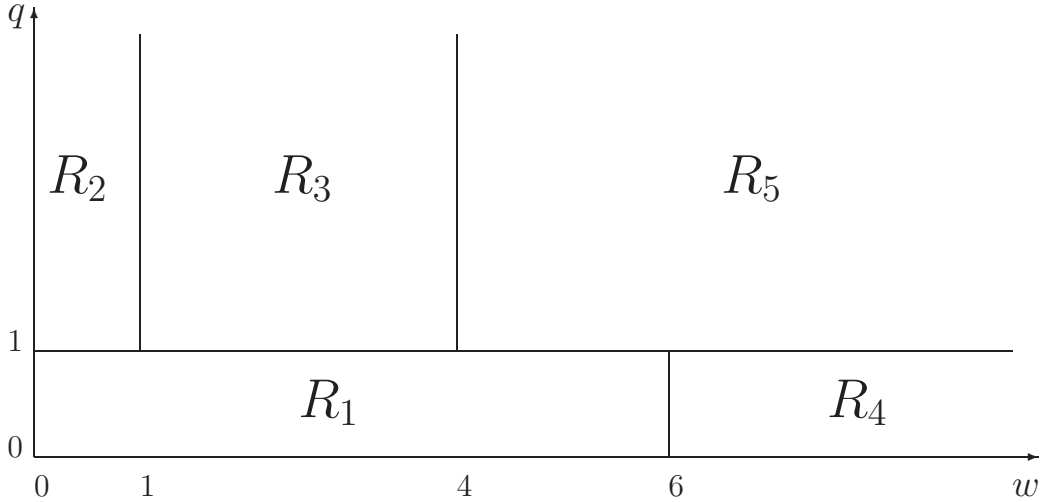
\begin{figure}[ht]
\begin{picture}(400,200)

\put(390,5){\large $w$} \put(20,5){$0$} \put(60,5){$1$}
\put(260,5){$6$} \put(180,5){$4$}

\put(10,20){$0$} \put(10,60){$1$} \put(10,185){\large $q$}

\put(20,20){\vector(1,0){380}} \put(20,60){\line(1,0){370}}

\put(20,20){\vector(0,1){170}} \put(60,60){\line(0,1){120}}
\put(260,20){\line(0,1){40}} \put(180,60){\line(0,1){120}}

\put(120,35){\LARGE $R_1$} 
\put(25,120){\LARGE $R_2$} \put(110,120){\LARGE $R_3$}
\put(320,35){\LARGE $R_4$}
\put(280,120){\LARGE $R_5$} 

\end{picture}

\caption{To the proof of (\ref{calc})}
\end{figure}


For $(w, q) \in R_1$ we construct appropriate piecewise constant
function $A_1(w,q)$ estimating $A(w,q)$ from above. To do this we
take a rectangle
$$
Q\equiv\{w_0\le w\le w_1;\quad q_0\le q\le q_1\}\subset R_1
$$
and obtain the constant value for $A_1$ on this rectangle
replacing arguments in the formula of $A(w,q)$ by their extremal
values in this rectangle:
\begin{multline*}
A(w,q) = \frac{ q ( 4 w - ( w + 3 ) \ln( w + 1 ) ) - (1 - {1 \over
w}) \ln( w + 1 ) + 4 {w \over \ln( w + 1 )} - 4 }{ 2 ( q w + 1 ) }
\cdot {q \over q + 1}
\\ \le \frac{
[4 q_1 w_1] - [q_0 (w_0 + 3) \ln(w_0 + 1)] - [\ln(w_0 + 1)] +
[{\ln(w_0 + 1) \over w_0}] + [4 {w_1 \over \ln(w_1 + 1)}] - 4 }{
[2 (q_0 w_0 + 1)] }
\\ \times [{q_1 \over q_1 + 1}]=:A_1|_Q.
\end{multline*}
Here we used the fact that every function in square brackets is
monotone on $R_1$ in both variables.

For $(w,q) \in R_2$ we put $r = {1 \over q}$ and construct
appropriate piecewise constant function $A_2(w,r)$ estimating
$A(w,{1 \over r})$ from above.

In a similar way, for $(w,q) \in R_4$ we put $v = {1 \over w}$ and
construct appropriate piecewise constant function $A_4(v,q)$
estimating $A({1 \over v},q)$ from above.

Finally, for $(w,q) \in R_5$ we put $v = {1 \over w}$, $r = {1
\over q}$ and construct appropriate piecewise constant function
$A_5(v,r)$ estimating $A({1 \over v},{1 \over r})$ from above.

The estimating functions $A_1$, $A_2$, $A_4$, $A_5$ were
calculated on proper meshes with $15$ valid digits. This gives the
following results.

\begin{center}
\begin{tabular} {|c|c|c|c|c|}
\hline Region & & Mesh step in $w(v)$ & Mesh step in $q(r)$ &
Inequality
\\
\hline $R_1$ & & $6\cdot10^{-2}$ & $10^{-1}$ & $A_1\le0.51$ \\
\hline $R_2$ & & $10^{-2}$ & $10^{-2}$  & $A_2\le0.617$ \\
\hline $R_4$ & & $2\cdot10^{-2}$ & $10^{-1}$  & $A_4\le0.50$ \\
\hline $R_5$ & & $2\cdot10^{-3}$ & $10^{-2}$  & $A_5\le0.605$ \\
\hline\end{tabular}
\end{center}
\medskip
In all cases we obtain $A(w,q)\le 0.62$.

The analysis of $A(w,q)$ in $R_3$ is more careful. We again put $r
= {1 \over q}$ and claim that $A(w,{1 \over r})$ is decreasing in
$r$. To prove this we construct appropriate piecewise constant
function $A_3(w,r)$ estimating $\partial_rA(w,{1 \over r})$ from
above. This function was calculated on proper mesh with $15$ valid
digits. This gives the following result.

\begin{center}
\begin{tabular} {|c|c|c|c|c|}
\hline Region & & Mesh step in $w$ & Mesh step in $r$ & Inequality
\\
\hline $R_3$ & & $5\cdot10^{-3}$ & $10^{-3}$ & $A_3\le-0.08$ \\
\hline\end{tabular}
\end{center}
\medskip
Thus, the claim follows, and $A(w,{1 \over r})$ attains its
maximum in $R_3$ at the line $r=0$.

To find the actual maximum we consider
$$
A(w, \infty) = 2 - {1 \over 2} ( \ln(w + 1) + 3 {\ln(w + 1) \over
w} )
$$
and claim that that $A(w, \infty)$ is concave for $w \in [1, 4]$.
To prove this we construct appropriate piecewise constant
function $A_\infty(w)$ estimating $A_\infty''(w)$ from above. This
function was calculated on proper mesh with $15$ valid digits.
This gives the following result.

\begin{center}
\begin{tabular} {|c|c|c|c|}
\hline Region & & Mesh step & Inequality
\\
\hline $1\le w\le 4$ & & $3 \cdot 10^{-3}$ & $A_\infty \le -0.13$ \\
\hline\end{tabular}
\end{center}
Thus, the maximum is unique. Using standard computational methods
we find that it is achieved for $w \approx 1.816960565240$, $\max
A(w,\infty)\approx 0.627178211634$, and (\ref{calc}) follows.

To prove inequality (\ref{calcHalf}) we split $(w, q) \in \Real_+
\times [0, 1.36]$ into four regions, see Fig.~2:
\begin{eqnarray*}
 &&R_6 = [0, 3] \times [0, 1.36],\quad R_7 = [3, 5] \times [0, 1.3], \\
 && R_8 = [3,5] \times [1.3, 1.36],\quad R_9 =[5, \infty] \times [0,1.36].
\end{eqnarray*}

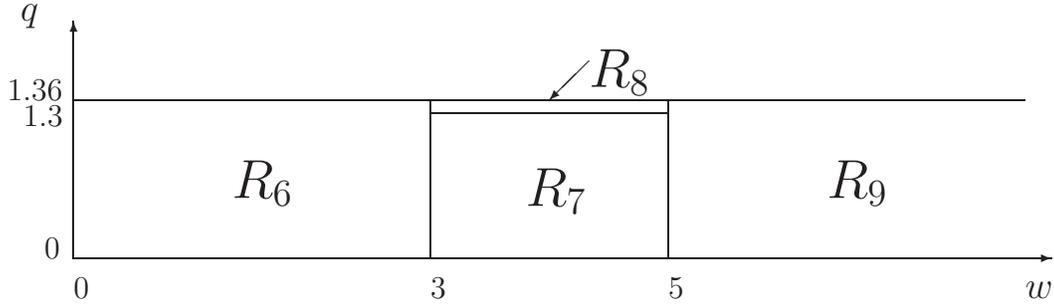
\begin{figure}[ht]
\begin{picture}(400,120)

\put(390,5){\large $w$} \put(30,5){$0$} \put(165,5){$3$} \put(255,5){$5$}

\put(19,20){$0$} \put(11,70){$1.3$} \put(5,80){$1.36$} \put(10,110){\large $q$}

\put(30,20){\vector(1,0){370}}
\put(30,80){\line(1,0){360}} \put(165,75){\line(1,0){90}}

\put(30,20){\vector(0,1){90}}
\put(165,20){\line(0,1){60}} \put(255,20){\line(0,1){60}}

\put(90,43){\LARGE $R_6$} \put(315,43){\LARGE $R_9$}
\put(201,40){\LARGE $R_7$} \put(225,85){\LARGE $R_8$}

\put(225,95){\vector(-1,-1){15}}

\end{picture}

\caption{To the proof of (\ref{calcHalf})}
\end{figure}

In these regions we used piecewise constant functions $A_1$ and
$A_4$ introduced earlier. These functions were calculated on
proper meshes with $15$ valid digits.
In $R_8$ we need to use the mesh step less then
$10^{-5}$, so we repeated calculations with $18$ valid digits.
 This gives the following results.

\begin{center}
\begin{tabular} {|c|c|c|c|c|}
\hline Region & & Mesh step in $w(v)$ & Mesh step in $q$ &
Inequality
\\
\hline $R_6$ & & $3 \cdot 10^{-3}$ & $1.36 \cdot 10^{-3}$ & $A_1 \le 0.498$ \\
\hline $R_7$ & & $2 \cdot 10^{-3}$ & $1.3 \cdot 10^{-3}$  & $A_1 \le 0.498$ \\
\hline $R_8$ & & $2 \cdot 10^{-4}$ & $6 \cdot 10^{-6}$  & $A_1 \le 0.49996$ \\
\hline $R_9$ & & $2 \cdot 10^{-3}$ & $1.36 \cdot 10^{-2}$  & $A_4 \le 0.4992$ \\
\hline\end{tabular}
\end{center}

The proof is complete.

\section*{Acknowledgements}

Authors are grateful to M.~Surnachev for simplification of the proof of Theorem \ref{mainThm}.



\begin{thebibliography}{99}

\bibitem{B-ZNS} S.~V.~Bankevich, ``On monotonicity of some functionals under monotone rearrangement with respect to one variable'',
ZNS POMI, {\bf 444} (2016), 5--14 (Russian); English transl.: J. Math. Sci. {\bf 224} (2017), N3, 385--390.

\bibitem{B-FAA} S.~V.~Bankevich, ``On the P\'olya--Szeg\"o inequality for functionals with variable exponent of summability'',
Funk. an. i prilozh. {\bf 52} (2018), to appear (Russian).

\bibitem{BN-DAN} S.~V.~Bankevich, A.~I.~Nazarov, ``On the P\'olya--Szeg\"o inequality generalization for one-dimensional
functionals'', Doklady RAN, {\bf 438} (2011), N1, 11--13 (Russian); English transl.: Doklady Math. {\bf 83}
(2011), N3, 287--289.

\bibitem{BN-CV} S.~V.~Bankevich, A.~I.~Nazarov, ``On monotonicity of some functionals under rearrangements'',
Calc. Var. and PDEs~{\bf53} (2015), 627--647.

\bibitem{Br} F.~Brock, ``Weighted Dirichlet-type inequalities for Steiner symmetrization'',
Calc. Var. and PDEs~{\bf8} (1999), 15--25.

\bibitem{BGH} G.~Buttazzo, M.~Giaquinta, S.~Hildebrandt, ``One-Dimentional Variational Problems. An Introduction'',
Oxford Lecture Series in Mathematics and Its Applications~{\bf15}, Oxford University Press, New York (1998).

\bibitem{DHHR} L.~Diening, P.~Harjulehto, P.~H\"ast\"o, M.~Ru\v{z}i\v{c}ka, ``Lebesgue and Sobolev spaces with
Variable Exponents'', Lecture Notes in Mathematics~{\bf2017}, Springer, Berlin (2011).

\bibitem{Shar} I.~I.~Sharapudinov, ``On certain problems of approximation theory in the Lebesgue spaces with variable exponent'',
Mathematics monographs series~{\bf5}, Vladikavkaz (2012) (Russian).

\bibitem{Tal} G.~Talenti, ``The art of rearranging'', Milan Journ. of Math.~{\bf84} (2016), N1, 105--157.

\bibitem{Zh} V.~V.~Zhikov, ``On variational problems and nonlinear elliptic equations with nonstandard growth conditions'',
Probl. Math. An.~{\bf54} (2011), 23--112 (Russian); English transl.: J. Math. Sci. {\bf173} (2011) N5, 463--570.

\end{thebibliography}
\end{document}